\newcommand{\de}{\partial}
\newcommand{\db}{\overline{\partial}}
\newcommand{\ddbar}{\sqrt{-1} \partial \overline{\partial}}
\newcommand{\ov}[1]{\overline{#1}}
\newcommand{\mn}{\sqrt{-1}}
\newcommand{\tr}[2]{\mathrm{tr}_{#1}{#2}}
\newcommand{\ti}[1]{\tilde{#1}}
\newcommand{\ve}{\varepsilon}
\newcommand{\re}{\textrm{Re}}
\renewcommand{\leq}{\leqslant}
\renewcommand{\geq}{\geqslant}
\renewcommand{\le}{\leqslant}
\renewcommand{\ge}{\geqslant}
\begin{document}
\newtheorem{claim}{Claim}
\newtheorem{theorem}{Theorem}[section]
\newtheorem{conjecture}[theorem]{Conjecture}
\newtheorem{lemma}[theorem]{Lemma}
\newtheorem{corollary}[theorem]{Corollary}
\newtheorem{proposition}[theorem]{Proposition}
\newtheorem{question}[theorem]{question}
\newtheorem{defn}[theorem]{Definition}
\theoremstyle{definition}
 \newtheorem{remark}[theorem]{Remark}
\numberwithin{equation}{section}

\newenvironment{example}[1][Example]{\addtocounter{remark}{1} \begin{trivlist}
\item[\hskip
\labelsep {\bfseries #1  \thesection.\theremark}]}{\end{trivlist}}
\title[Complex Monge-Amp\`ere with gradient term]
{The complex Monge-Amp\`ere equation \\ with a gradient term}

\author[V. Tosatti]{Valentino Tosatti}
\author[B. Weinkove]{Ben Weinkove}
\address{Department of Mathematics, Northwestern University, 2033 Sheridan Road, Evanston, IL 60208}
\thanks{Partially supported by NSF grants DMS-1610278 (V.T.) and DMS-1709544 (B.W.). Part of this work was done
while the first-named author was visiting the Center for Mathematical Sciences and Applications at Harvard University, which he thanks for the hospitality.}
\dedicatory{Dedicated to Professor D.H. Phong on the occasion of his 65th birthday}

\begin{abstract} We consider the complex Monge-Amp\`ere equation with an additional linear gradient term inside the determinant.  We prove existence and uniqueness of solutions to this equation on compact Hermitian manifolds.
\end{abstract}

\maketitle

\section{Introduction}

Let $M$ be a compact complex manifold of complex dimension $n$.  When $M$ admits a K\"ahler metric $g=(g_{i\ov{j}})$, Yau \cite{Ya} proved the now classic result that the complex Monge-Amp\`ere equation
\begin{equation} \label{kma}
\det (g_{i\ov{j}} +   u_{i\ov{j}}) = e^{F}\det (g_{i\ov{j}}) , \quad (g_{i\ov{j}} + u_{i\ov{j}})>0,
\end{equation}
admits a unique solution $u$ with $\sup_M u=0$, as long as $F$ is normalized so that  $(e^F-1)$ has zero integral.  Equivalently, one can prescribe the volume form  of a K\"ahler metric within a given K\"ahler class.

Yau's result has been extended and built on in various ways.  Modulo adding a constant to $F$, the equation (\ref{kma}) can be solved for $g$ Hermitian (by work of Cherrier \cite{Ch} and the authors \cite{TW2}, see also \cite{GL, TW1}) and  for $g$ almost Hermitian (Chu-Tosatti-Weinkove \cite{CTW}).
Fu-Wang-Wu \cite{FWW1, FWW2} considered the Monge-Amp\`ere equation obtained by taking the determinant of the $(n-1, n-1)$ form
$$\omega^{n-1} + \sqrt{-1} \partial \ov{\partial} u \wedge \omega^{n-2}.$$
This is the natural equation on compact manifolds associated to Harvey-Lawson's notion of $(n-1)$-plurisubharmonicity \cite{HL}, and was solved for $\omega$ Hermitian by the authors \cite{TW3, TW5}.
Building on this work, Sz\'ekelyhidi-Tosatti-Weinkove \cite{STW} proved existence of solutions for Monge-Amp\`ere equation associated to
$$\omega^{n-1} + \sqrt{-1} \partial \ov{\partial} u \wedge \omega^{n-2} + L(x, \nabla u),$$
for the specific first order term
\begin{equation} \label{fot}
L(x, \nabla u) = \textrm{Re}(\sqrt{-1} \partial u \wedge \ov{\partial} \omega^{n-2})
\end{equation} introduced by Popovici \cite{Po} and independently in \cite{TW5}.    This yielded a solution of the Gauduchon conjecture \cite{Ga} on the existence of Gauduchon metrics with prescribed volume form.  The proof in \cite{STW} makes careful use of the specific form of this first order term term $L(x, \nabla u)$. See also \cite{GN,FGZ,Sz,Zh} for related follow-up work.

Other nonlinear equations involving gradient terms arise naturally by motivations from mathematical physics,  including the Fu-Yau equation \cite{FY} and its extensions by Phong-Picard-Zhang \cite{PPZ1, PPZ2, PPZ3}.    In particular, the paper \cite{PPZ1} considers the complex Hessian equations $$(\chi(z,u) + \sqrt{-1} \partial \ov{\partial} u)^k \wedge \omega^{n-k} = \psi(z, u,\nabla u) \omega^n$$ where gradient terms appear on the right hand side.

In light of these results, it is natural to consider
fully nonlinear equations in terms of the metric $$\ti{\omega}=\omega +  \sqrt{-1} \partial \ov{\partial} u + L(x,\nabla u),$$
for $L$ a  linear term involving the gradient of $u$.
Indeed, this study was initiated recently by R. Yuan \cite{Yuan}.  However the family of equations he deals with  includes the Monge-Amp\`ere equation
$\ti{\omega}^n = e^{F} \omega^n$ only in the case of complex  dimension $n=2$ \cite[Corollary 1.5]{Yuan}.  The current paper settles the case  $n>2$ left open by Yuan.

More precisely, let $(M, g)$ be a compact Hermitian manifold of complex dimension $n$.  By analogy to (\ref{fot}), we consider the term
$$L(x,\nabla u) = \mn a\wedge\db u-\mn\ov{a}\wedge\de u$$
where $a$ is a smooth $(1,0)$-form.  Indeed, this is the most general term of the form $\alpha \wedge \partial u+ \beta\wedge \ov{\partial}u$ for $1$-forms $\alpha$ and $\beta$, which is also real and of type $(1,1)$.  In local coordinates, we may write $L(x,\nabla u) = \sqrt{-1} (a_i u_{\ov{j}} + a_{\ov{j}} u_i) dz^i \wedge d\ov{z}^j$, where  $a=a_i dz^i$ and $a_{\ov{i}}=\ov{a_i}$.

We prove the following:

\begin{theorem}\label{main}  Given $F \in C^{\infty}(M)$ and a smooth $(1,0)$ form $a$ on $M$, there exists a unique pair $(u, b)$ with $u\in C^{\infty}(M)$ and $b \in \mathbb{R}$ satisfying the equation
\begin{equation} \label{maf}
\begin{split}
& \det (g_{i\ov{j}} +  a_i u_{\ov{j}} + a_{\ov{j}} u_i + u_{i\ov{j}}) = e^{F+b}\det (g_{i\ov{j}}) , \\
&  \mathrm{with\ \ } (\tilde{g}_{i\ov{j}}) := (g_{i\ov{j}} + a_i u_{\ov{j}} + a_{\ov{j}} u_i + u_{i\ov{j}})>0, \quad \mathrm{ and } \ \sup_Mu=0.
\end{split}
\end{equation}
\end{theorem}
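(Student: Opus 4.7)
My plan is to establish existence by the continuity method and uniqueness by a maximum-principle argument. Given two solutions $(u,b)$ and $(v,c)$, subtracting the logarithms of the two equations and using the linearity of $L(x,\nabla\cdot)$ together with the standard concavity trick for $\log\det$ yields
\[
\int_0^1 \tilde{g}_s^{i\ov{j}}\bigl((u-v)_{i\ov{j}} + a_i(u-v)_{\ov{j}} + a_{\ov{j}}(u-v)_i\bigr)\,ds = b-c,
\]
where $\tilde{g}_s$ interpolates between $\tilde{g}_u$ and $\tilde{g}_v$. This is a linear elliptic equation for $u-v$ with no zeroth-order term, so the strong maximum principle forces $b=c$ and $u-v\equiv\mathrm{const}$; the normalization $\sup u=\sup v=0$ then gives $u\equiv v$.

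\textbf{Continuity method.} For existence, I would deform from the Hermitian Monge-Amp\`ere equation by considering, for $t\in[0,1]$, the family
\[
\det\bigl(g_{i\ov{j}} + t(a_i u_{\ov{j}} + a_{\ov{j}} u_i) + u_{i\ov{j}}\bigr) = e^{F+b_t}\det g_{i\ov{j}},\qquad \sup_M u_t=0,
\]
so that $t=0$ is the equation solved by Cherrier~\cite{Ch} and the authors~\cite{TW2}. Openness at a solution $(u_t,b_t)$ follows from the implicit function theorem: the linearization with respect to $u$ is elliptic with no zeroth-order term, and the free parameter $b$ absorbs the one-dimensional obstruction from the Fredholm alternative, exactly as in the openness argument of~\cite{TW2}. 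Closedness is then equivalent to uniform $C^\infty$ bounds on $(u_t,b_t)$.

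\textbf{A priori estimates.} The constant $b_t$ is bounded immediately: at the maximum point of $u_t$ the first derivatives of $u_t$ vanish and $u_{i\ov{j}}\le 0$ as a matrix, so $\det\tilde g\le\det g$ there and hence $b_t\le -\inf F$, with the matching lower bound $b_t\ge -\sup F$ coming from the minimum point. The $C^0$ estimate is more delicate because of the drift term, but since $\nabla u$ enters $L$ only linearly with smooth coefficients, I expect it to yield to the Moser iteration of~\cite{TW2} or the ABP-type approach used in~\cite{Sz}, with the new gradient contribution absorbed by Young's inequality. The second-order estimate should follow by applying the maximum principle to a test quantity of the form $\log\lambda_1(\tilde g) + \phi(|\nabla u|^2) + Au$ as in~\cite{STW}; the crucial point is that because $L$ is \emph{linear} in $\nabla u$, the error terms produced on differentiating the equation twice are strictly milder than those handled in~\cite{STW}. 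Once $\tilde g$ is uniformly equivalent to $g$, the $C^{2,\alpha}$ bound follows from the Hermitian version of the complex Evans-Krylov theorem and higher regularity by Schauder bootstrap.

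\textbf{Main obstacle.} I expect the second-order estimate to be the most technical step: the computation requires commuting covariant derivatives through the non-K\"ahler fundamental form $\omega$ and controlling the cross terms involving $\nabla a$, $\nabla^2 a$, and the torsion of $g$ that appear after differentiating the equation twice. If the single test function above does not close the estimate, one can follow~\cite{STW} and first prove a global gradient estimate, which then feeds into a cleaner Hessian bound; the remaining steps should be essentially parallel to the purely Hermitian Monge-Amp\`ere case.
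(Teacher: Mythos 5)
Your high-level strategy --- continuity method plus a priori estimates, uniqueness by a maximum-principle argument applied to the linearized operator with no zeroth-order term --- agrees with the paper's, and your uniqueness argument and the elementary bound on $b_t$ are exactly right. One genuine difference in the continuity method: the paper keeps the gradient coefficient $a$ fixed and deforms the right-hand side to $e^{tF+b_t}\omega^n$, starting from the trivial solution $(u_0,b_0)=(0,0)$, whereas you deform the coefficient $a\mapsto ta$ starting from the Cherrier/Tosatti--Weinkove Hermitian Monge--Amp\`ere solution. Both deformations work, because the paper's estimates are uniform in $\|a\|_{C^2}$; the paper's choice has the mild advantage of a trivial starting point and of not needing to invoke the $a=0$ existence theorem. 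For openness you both need the same observation: since the operator $L$ is not self-adjoint, the cokernel is spanned by a Gauduchon-type density $e^v$, and $b$ absorbs this one-dimensional obstruction.

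Where your outline is too optimistic is the $C^0$ estimate. Precisely because $L$ is not self-adjoint with respect to $\omega^n$, neither the Moser iteration of \cite{TW2} nor the Green's-function $L^1$ bound works directly; the gradient term is not simply ``absorbed by Young's inequality.'' The paper first finds (by Gauduchon's theorem) a conformal factor $e^v$ making $\int_M H(\psi)e^v\omega^n=0$, then runs the Green's function for the conformally rescaled operator to get $\|u\|_{L^1}\le C$, and only then promotes this to $L^\infty$ by ABP. This extra step is essential. For the second-order estimate, your proposed barrier $Au$ has $\psi''=0$, which loses the $\psi''\tilde g^{i\bar i}|u_i|^2$ term needed to dominate the negative contributions at the end of the computation; the paper uses $\psi(u)=e^{-Au}$ (convex) for exactly this reason. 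Two further technical points you do not address but which the paper handles explicitly: the non-differentiability of $\lambda_1$ (dealt with via a viscosity-type replacement function following \cite{BCD}), and the fact that the resulting bound is only $|\sqrt{-1}\partial\bar\partial u|\le C(1+\sup|\partial u|^2)$, so a separate step --- a blow-up/Liouville argument, or a direct maximum-principle gradient estimate as in \cite{Zha} --- is needed to close the gradient bound before invoking Evans--Krylov. In the pure Hermitian case the Hessian bound comes free of the gradient, so ``essentially parallel'' undersells what is genuinely new here.
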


The case $n=2$ is due to Yuan \cite{Yuan}.  We also remark that Zhang \cite{Zha} proved a uniform gradient estimate for a class of equations which includes (\ref{maf}).

We can rewrite \eqref{maf} in coordinate-free notation by letting $$\ti{\omega}:=\omega+\mn a\wedge\db u-\mn\ov{a}\wedge\de u+\ddbar u>0,$$
be the new Hermitian metric whose volume form equals
$$\ti{\omega}^n=e^{F+b}\omega^n.$$
\begin{remark}
As an aside, note that if we choose $a$ to be a holomorphic $1$-form, then we can write
\begin{equation}\label{aep}
\ti{\omega}=\omega+\de\ov{\gamma}+\db\gamma,
\end{equation}
where $\gamma$ is the $(1,0)$ form given by
$$\gamma=-\mn\left(ua+\frac{\de u}{2}\right).$$
In this case, if we also have that $\de\db\omega=0$ (which when $n=2$ is the Gauduchon condition \cite{Ga0}), then $\omega$ defines a cohomology class in Aeppli cohomology, and \eqref{aep} shows that the metric $\ti{\omega}$ also satisfies $\de\db\ti{\omega}=0$ and lies in the same Aeppli cohomology class.
\end{remark}

The outline of our proof is as follows.  We begin by proving \emph{a priori} estimates for solutions of (\ref{maf}).
  In Section \ref{sectionzero}, we establish a uniform $L^{\infty}$ bound for $u$, with an approach that uses the Aleksandrov-Bakelman-Pucci estimate.  In Section \ref{sectionsecond} we give an estimate on the second derivatives $\sqrt{-1}\partial\ov{\partial}u$ of $u$ in terms of the first derivatives, using a maximum principle argument involving the largest eigenvalue $\lambda_1$ of the metric $\tilde{g}$. The particular quantity we use for the maximum principle is
  $$Q=\log \lambda_1 +  \frac{ | \partial u|^2_g}{\sup_M | \partial u|^2_g +1} + e^{-A u},$$
  for a large constant $A$.  This differs (and in many cases is simpler) than the quantities used in the literature mentioned above.
     To overcome the fact that the eigenvalue $\lambda_1$ is not differentiable in general, we choose to use a viscosity argument (adapted from \cite{BCD}, and hinted to in \cite{Sz}), which to our knowledge is new in this Hermitian setting.  Finally, in Section \ref{sectionproof}, we complete the proof of Theorem \ref{main}:  we apply a standard blow-up argument to obtain the first order estimate and then standard theory gives the higher order estimates.  Given the $C^{\infty}$ \emph{a priori }estimates, the existence follows from a fairly standard continuity argument and uniqueness is a consequence of the maximum principle.
 
 Instead of using a blow-up argument, the gradient estimate can be obtained directly by a maximum principle argument, as shown in an earlier work of Zhang \cite[Remark 2]{Zha} (see also the related works \cite{B3,FGZ,Yuan}).  We thank the referee for pointing out the reference \cite{Zha}, of which we were not aware  when we completed the first version of this article.
  
  \medskip
\noindent
{\bf Acknowledgments. }  Both authors owe many thanks to Professor Phong, to whom this article is dedicated.  His mathematical wisdom and insights are an inspiration to us.  Happy birthday Phong!

\section{Zero order estimate} \label{sectionzero}

Let $u,F\in C^\infty(M)$ and $a\in \Lambda^{1,0}M$ satisfy
\begin{equation} \label{pde}
\begin{split}
& \det (g_{i\ov{j}} +  a_i u_{\ov{j}} + a_{\ov{j}} u_i + u_{i\ov{j}}) = e^{F}\det (g_{i\ov{j}}) \\
&  \quad  (\tilde{g}_{i\ov{j}}):= (g_{i\ov{j}} + a_i u_{\ov{j}} + a_{\ov{j}} u_i + u_{i\ov{j}})>0,
\end{split}
\end{equation}
with $\sup_M u=0$.  We will write $\tilde{\omega}$ for the $(1,1)$ form associated to the metric $\tilde{g}_{i\ov{j}}$.

We prove a uniform estimate for $u$.

\begin{theorem}\label{0th}
There is a constant $C$ that depends only on $\sup_M|F|$, $\sup_M |a|_g$, and on the geometry of $(M,g)$ such that
\begin{equation}\label{li}
\sup_M|u|\leq C.
\end{equation}
\end{theorem}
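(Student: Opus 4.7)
Since $\sup_M u = 0$, setting $L := -\inf_M u \geq 0$ it suffices to show $L \leq C$. The plan is to adapt the Aleksandrov--Bakelman--Pucci (ABP) approach of Błocki, as extended to the Hermitian Monge--Amp\`ere equation in \cite{TW2}, so as to accommodate the extra first-order term in \eqref{pde}. Let $x_0 \in M$ be a minimum point of $u$ and introduce holomorphic coordinates centered at $x_0$, with $g_{i\bar j}(0) = \delta_{ij}$, defined on a coordinate ball $B_r \subset \mathbb{C}^n$ of fixed small radius $r$ depending only on $(M,g)$. Consider
\begin{equation*}
w(z) := u(z) + \epsilon(|z|^2 - r^2),
\end{equation*}
for $\epsilon > 0$ small, so that $w(0) = -L - \epsilon r^2$ while $w \leq 0$ on $\partial B_r$. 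The classical real ABP inequality yields
\begin{equation*}
L + \epsilon r^2 \leq -\inf_{B_r} w \leq C_n\, r \left(\int_P \det_{\mathbb{R}} D^2 w\right)^{1/(2n)},
\end{equation*}
where $P \subset B_r$ is the lower contact set, on which $D^2 w \geq 0$; on $P$ the complex Hessian $(w_{i\bar j})$ is then Hermitian positive semidefinite, and Błocki's inequality $(\det_{\mathbb{R}} D^2 w)^{1/2} \leq 4^n \det(w_{i\bar j})$ reduces matters to bounding $\det(w_{i\bar j})$ on $P$.

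From \eqref{pde} one has
\begin{equation*}
w_{i\bar j} = \tilde g_{i\bar j} + (\epsilon - 1)\delta_{ij} - (g_{i\bar j} - \delta_{ij}) - (a_i u_{\bar j} + a_{\bar j} u_i).
\end{equation*}
In the classical setting (no gradient term), $\epsilon$ and $r$ are chosen small enough so that $(w_{i\bar j}) \leq (\tilde g_{i\bar j})$, whence $\det(w_{i\bar j}) \leq \det(\tilde g_{i\bar j}) = e^F \det(g_{i\bar j}) \leq C$, and the ABP estimate closes, giving $L \leq Cr$. The new feature is the first-order term $a_i u_{\bar j} + a_{\bar j} u_i$, which is of size $|a|_g\,|Du|_g$. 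To absorb it, I would use the matrix Cauchy--Schwarz inequality
\begin{equation*}
\pm (a_i u_{\bar j} + a_{\bar j} u_i) \leq \left(s^{-1}|a|_g^2 + s\,|Du|_g^2\right) g_{i\bar j}
\end{equation*}
valid for any $s > 0$, combined with the contact-set gradient bound $|Du(z)| \leq C(L + \epsilon r^2)/r$ on $P$ that follows from the ABP geometry (since $w$ agrees with its convex envelope on $P$, $w \leq 0$ on $\partial B_r$, and $\inf w = -L - \epsilon r^2$). A balanced choice of $s$ and of the parameters $\epsilon, r$ in terms of $\sup_M |a|_g$ would then yield an effective matrix inequality of the form $(w_{i\bar j}) \leq (\tilde g_{i\bar j})$ plus a bounded error, producing the desired pointwise bound on $\det(w_{i\bar j})$ on $P$. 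Feeding this back into ABP then gives $L \leq C$.

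The main obstacle is precisely this last step. The first-order term contributes an error of order $|a|_g L/r$ on the contact set, which is not automatically small, so the standard comparison $(w_{i\bar j}) \leq (\tilde g_{i\bar j})$ no longer holds and $\det(w_{i\bar j}) \leq C$ on $P$ does not follow immediately. Breaking this apparent circular dependence on $L$ --- either by an appropriate bootstrapping in the size of $L$, by a refinement of the auxiliary function $w$ to cancel the first-order contribution (perhaps by adding a term proportional to $u$ times a primitive of $a$), or by exploiting the specific rank-two Hermitian structure of $a_i u_{\bar j} + a_{\bar j} u_i$ via the Cauchy--Schwarz bound above --- is the subtlest point of the proof.
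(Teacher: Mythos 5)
You have correctly located the central obstacle---on the classical ABP contact set the gradient of $u$ scales like $L/r$, so the first-order term contributes an error of order $|a|L/r$ that cannot be absorbed without circularity---but the fixes you suggest do not match how the paper overcomes it, and as written the argument does not close. The tool you are missing is Sz\'ekelyhidi's \emph{small-gradient} variant of ABP (\cite[Proposition 10]{Sz}): instead of the classical contact set, one intersects it with $\{|Dv| < \ve/2\}$, and the inequality reads $\ve^{2n} \leq C\int_P \det D^2 v$. On this restricted set $P$ the gradient of $u$ is bounded by a fixed multiple of $\ve$ \emph{independently of $L$}, so the first-order term contributes only $O(\ve)\,\omega$; choosing $\ve$ small but uniform forces $\ti{\omega} \geq \tfrac12\omega$ on $P$, and the equation then gives $\det(D^2 v) \leq C$ on $P$ with no circularity.

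However, because this form of ABP yields only a lower bound on the measure of the contact set, $|P| \geq C^{-1}$, it cannot by itself bound $L = -\inf_M u$. The paper therefore establishes beforehand a uniform $L^1$ bound $\int_M(-u)\,\omega^n \leq C$, via a Gauduchon--Green's function argument applied to the linear operator $H(\psi) = \Delta_g\psi + \tr{\omega}{(\mn a\wedge\db\psi - \mn\ov{a}\wedge\de\psi)}$ (after a conformal change making $H$ self-adjoint-like, so a Green's function with a uniform lower bound exists), and then closes the $L^\infty$ estimate by combining $|P| \geq C^{-1}$ with $v \leq I + \ve/2 < 0$ on $P$ and the $L^1$ bound. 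Your proposal omits both of these ingredients; in particular, the bootstrapping, the modified auxiliary function, and the rank-two Cauchy--Schwarz idea you float all remain trapped in the same circular dependence on $L$ that you diagnose.
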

\begin{proof}
We employ the Aleksandrov-Bakelman-Pucci estimate, whose usage  for the complex Monge-Amp\`ere equation originated in work of Cheng-Yau (see \cite{Be}), and was more recently revisited by B\l ocki \cite{B,B2} and Sz\'ekelyhidi \cite{Sz}.  We follow \cite{CTW,Sz, TW4}.

First, we observe that
\begin{equation}\label{l1}
\int_M(-u)\omega^n\leq C,
\end{equation}
for a uniform constant $C$. Indeed, let
$$H(u)=\Delta_g u +\tr{\omega}{(\mn a\wedge\db u -\mn\ov{a}\wedge\de u)}=\tr{g}{\ti{g}}-n\geq -n,$$
where $\Delta_gu=\tr{\omega}{\ddbar u}=\frac{n\ddbar u\wedge\omega^{n-1}}{\omega^n}$ is the complex Laplacian of $g$.
Since the kernel of $H$ consists of just constants, a classical argument of Gauduchon \cite{Ga0} (cf. \cite[Theorem 2.2]{CTW}) shows that there is a smooth function $v$ such that
\begin{equation}\label{hha}
\int_M H(\psi) e^v\omega^n=0,
\end{equation}
for all smooth functions $\psi$. We then define a new Hermitian metric $\hat{\omega}=e^{v/(n-1)}\omega$. Its operator $\hat{H}$, defined in the same way
\begin{equation}\label{hhat}
\hat{H}(\psi)=\Delta_{\hat{g}} \psi +\tr{\hat{\omega}}{(\mn a\wedge\db \psi -\mn\ov{a}\wedge\de \psi)},
\end{equation}
satisfies
\begin{equation}\label{ddd}
\hat{H}(u)=e^{-v/(n-1)}H(u)\geq -C,
\end{equation}
and now we have
\begin{equation}\label{dd}
\int_M \hat{H}(\psi) \hat{\omega}^n=0,
\end{equation}
for all $\psi$. We may then use the Green's function for $\hat{H}$ (with respect to the metric $\hat{\omega}$), to deduce the uniform $L^1$ bound for $u$ in \eqref{l1} by the exact same argument as in \cite[Proof of Theorem 2.1]{TW5}. Briefly, standard theory gives us a Green's function $G(x,y)$, normalized to have zero integral, which has a uniform lower bound and such that
$$\psi(x)=\frac{1}{\int_M\hat{\omega}^n}\int_M \psi\hat{\omega}^n-\int_M \hat{H}(\psi)(y)G(x,y)\hat{\omega}^n(y),$$
holds for all $\psi$ and all $x\in M$. Thanks to \eqref{dd} we can add a uniform constant to $G$ to make it nonnegative, while preserving the same Green's formula, and we then apply this to $u$ with $x$ a point where $u(x)=0$, so that from \eqref{ddd} and the lower bound for $G$ we easily deduce \eqref{l1}.

Next, we  promote the $L^1$ bound \eqref{l1} to the $L^\infty$ bound \eqref{li} using ABP, as in \cite[Proposition 3.1]{CTW} and \cite{Sz, TW4}. Let $x_0\in M$ be a point where $u$ achieves its infimum $I=\inf_M u$, and fix a coordinate unit ball $B$ centered at $x_0$. In this ball, let $v=u+\ve |x|^2$, where $\ve>0$ will be a uniform constant to be chosen later. We have $\inf_{\de B}v\geq v(0)+\ve$, so \cite[Proposition 10]{Sz} gives us that
\begin{equation}\label{dddd}
\ve^{2n}\leq C\int_P\det(D^2v),
\end{equation}
for a universal constant $C$, where
$$P = \{ x \in B \ | \ |Dv(x)|<\ve/2, \textrm{ and } v(y) \ge v(x) + Dv(x)\cdot(y-x) \ \forall y \in B \}.$$
Given now any $x\in P$, we have $D^2v(x)\geq 0$ and $|Du(x)|\leq 5\ve/2$ so at $x$
$$\mn a\wedge \db u-\mn\ov{a}\wedge \de u +\ddbar u\geq -C\ve\omega,$$
for a uniform constant $C$, therefore if we choose $\ve$ sufficiently small (but uniformly bounded away from zero), we get
$$\ti{\omega}(x)\geq\frac{1}{2}\omega(x),$$
and from the Monge-Amp\`ere equation \eqref{pde} we deduce
$$\ti{\omega}(x)\leq C\omega(x),$$
from which
$$\ddbar u(x)\leq C\omega(x),$$
and so $0\leq \ddbar v(x)\leq C\omega(x)$. But a simple linear algebra inequality (using that $(D^2v(x))\geq 0$) gives
$$\det(D^2v(x))\leq C \det(v_{i\ov{j}})^2(x)\leq C,$$
which together with \eqref{dddd} gives
$$|P|\geq C^{-1},$$
where $|P|$ denotes the Lebesgue measure. For all $x\in P$ we have
$$v(x)\leq v(0)+\frac{\ve}{2}=I+\frac{\ve}{2},$$
and we may assume that $I+\frac{\ve}{2}<0$, so
$$C^{-1}\leq|P|\leq\frac{\int_P(-v)}{|I+\frac{\ve}{2}|}\leq\frac{C}{|I+\frac{\ve}{2}|},$$
using the $L^1$ bound \eqref{l1}, which proves \eqref{li}.
\end{proof}

\section{Second order estimate} \label{sectionsecond}

In this section we prove a bound on $\sqrt{-1}\partial \ov{\partial}u$ in terms of a bound on the square of the first derivative of $u$.  This estimate takes the same form as the Hou-Ma-Wu estimate \cite{HMW} for the complex Hessian equations (see also the later works \cite{CTW, Sz, STW, TW3, TW5}) although here the quantity to which we apply the maximum principle is slightly simpler.

\begin{theorem}\label{2nd}
Let $u,F\in C^\infty(M)$ and $a\in \Lambda^{1,0}M$ satisfy \eqref{pde}, with $\sup_M u=0$. Then there is a constant $C$ that depends only on $\sup_M|u|$, $\|a\|_{C^2(M)}$, $\|F\|_{C^2(M)}$ and on the geometry of $(M,g)$ such that
$$\sup_M | \sqrt{-1}\partial \ov{\partial} u|_g \le C(1+\sup_M|\de u|^2_g).$$
\end{theorem}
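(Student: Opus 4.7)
The plan is to apply the maximum principle to the quantity
\[
Q=\log\lambda_1+\frac{|\partial u|_g^2}{K+1}+e^{-Au},\qquad K:=\sup_M|\partial u|_g^2,
\]
where $\lambda_1$ denotes the largest eigenvalue of $\tilde g$ with respect to $g$ and $A\gg 1$ is a constant to be fixed. Suppose $Q$ attains its maximum at $x_0\in M$, and choose holomorphic coordinates around $x_0$ so that $g_{i\bar j}(x_0)=\delta_{ij}$ and $\tilde g_{i\bar j}(x_0)$ is diagonal with entries $\lambda_1\ge\cdots\ge\lambda_n>0$. Let $\mathcal L:=\tilde g^{i\bar j}\partial_i\partial_{\bar j}$ be the leading part of the linearization of $\log\det\tilde g$. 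Equation \eqref{pde} gives $\mathcal L(u)=n-\mathrm{tr}_{\tilde g}g-\tilde g^{i\bar j}(a_iu_{\bar j}+a_{\bar j}u_i)$, while differentiating it once yields expressions for $\mathcal L(u_k)$ controlled by $\partial F$, one derivative of the background data, and first-order quantities in $u$.

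The first obstacle is that $\lambda_1$ need not be smooth at $x_0$ if it has multiplicity greater than one. I would follow the viscosity approach adapted from Brendle--Choi--Daskalopoulos (alluded to in the outline): construct a smooth local upper barrier $\tilde\lambda_1\ge\lambda_1$ near $x_0$ with equality at $x_0$, obtained by perturbing $\tilde g_{i\bar j}$ by a small constant Hermitian matrix that breaks the eigenvalue degeneracy and then taking the Rayleigh quotient of a smoothly varying unit eigenvector. Then $\tilde Q:=\log\tilde\lambda_1+|\partial u|_g^2/(K+1)+e^{-Au}$ is smooth, still attains a local maximum at $x_0$, and the usual first and second variation formulas for simple eigenvalues reproduce the standard bad third-order contribution
\[
-\tilde g^{i\bar i}\sum_{p>1}\frac{|\tilde g_{1\bar p,i}|^2+|\tilde g_{p\bar 1,i}|^2}{\lambda_1(\lambda_1-\lambda_p)},
\]
that one would formally obtain by differentiating $\log\lambda_1$.

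The heart of the argument is the computation of $\mathcal L(\tilde Q)(x_0)\le 0$. The term from $|\partial u|_g^2/(K+1)$ produces the good quadratic quantity $\tfrac{1}{K+1}\tilde g^{i\bar i}\sum_p(|u_{pi}|^2+|u_{p\bar i}|^2)$ together with manageable remainders obtained from the once-differentiated equation. Applying $\partial_1\partial_{\bar 1}$ to $\log\det\tilde g=F+\log\det g$ and invoking concavity of $\log\det$ gives a second good term $\tfrac{1}{\lambda_1}\tilde g^{i\bar j}\tilde g^{p\bar q}|\tilde g_{i\bar q,1}|^2$, which dominates the bad third-order term above once $\tilde g_{1\bar p,i}$ is expanded as $u_{i1\bar p}$ plus lower-order pieces. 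The extra gradient contributions $a_iu_{\bar j}+a_{\bar j}u_i$ in $\tilde g$, absent from the pure Monge--Amp\`ere case, generate cross terms of order $|\partial^2u|\cdot|\partial u|\cdot|a|$; these I would handle by Cauchy--Schwarz and absorb into the good quadratic term using $|\partial u|_g^2\le K$. The critical point conditions $\partial\tilde Q(x_0)=0$ then allow first derivatives of $\lambda_1$ to be substituted by first derivatives of $|\partial u|_g^2$ and of $u$.

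I would conclude with the Hou--Ma--Wu dichotomy. If $\lambda_n(x_0)\ge\delta\lambda_1(x_0)$ for a small uniform $\delta>0$, all eigenvalues of $\tilde g$ are comparable at $x_0$, and uniform ellipticity makes the bad third-order term directly absorbable. Otherwise $\mathrm{tr}_{\tilde g}g\ge 1/\lambda_n(x_0)$ is large, and the contribution $\mathcal L(e^{-Au})=A^2e^{-Au}\tilde g^{i\bar j}u_iu_{\bar j}+Ae^{-Au}(\mathrm{tr}_{\tilde g}g-n+\tilde g^{i\bar j}(a_iu_{\bar j}+a_{\bar j}u_i))$ supplies a positive term of size $Ae^{-Au}/\lambda_n$ that, for $A$ chosen large depending only on the allowed quantities, dominates all remaining error and forces $\lambda_1(x_0)\le C(1+K)$. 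Since $Q$ attains its maximum at $x_0$, this bound propagates to all of $M$, yielding the estimate. The main obstacle is the viscosity handling of $\log\lambda_1$, which is the novel ingredient in this Hermitian setting; bookkeeping the gradient terms $a\wedge\bar\partial u$ in the linearization is routine in comparison.
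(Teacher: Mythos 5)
Your overall plan coincides with the paper's: the same test quantity $Q=\log\lambda_1+\varphi(|\partial u|_g^2)+\psi(u)$ with $\varphi(s)=s/K$ and $\psi(t)=e^{-At}$, the same idea of handling the non-smoothness of $\lambda_1$, applying the linearized operator, differentiating the equation twice and invoking concavity of $\log\det$, and absorbing the extra gradient-term cross products by Cauchy--Schwarz into the good quadratic term. That said, there are two points where your write-up diverges from what actually works, one of them a genuine gap.

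\textbf{The barrier direction is wrong as stated.} You propose a smooth \emph{upper} barrier $\tilde\lambda_1\ge\lambda_1$ near $x_0$ with equality at $x_0$, obtained by perturbing $\tilde g$, and you claim $\tilde Q=\log\tilde\lambda_1+\varphi+\psi$ still has a local max at $x_0$. This does not follow: from $\tilde\lambda_1\ge\lambda_1$ you get $\tilde Q\ge Q$ with equality at $x_0$, and knowing $Q(x)\le Q(x_0)=\tilde Q(x_0)$ tells you nothing about $\tilde Q(x)$ versus $\tilde Q(x_0)$. The perturbation argument of Sz\'ekelyhidi and STW (which is what your description most closely resembles --- subtracting a small diagonal matrix $B$ with $B_{11}=0$ to break degeneracy, or equivalently taking a Rayleigh quotient of a smooth vector field) produces a \emph{lower} barrier $\tilde\lambda_1\le\lambda_1$; then $\tilde Q\le Q$ with equality at $x_0$, the max is preserved, and $\mathcal L\tilde Q(x_0)\le0$ holds. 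Your argument as written would fail at exactly this step. Note also a sign confusion in the same paragraph: the extra piece $\sum_{p>1}\frac{|\tilde g_{1\bar p,i}|^2+|\tilde g_{p\bar 1,i}|^2}{\lambda_1-\lambda_p}$ from the second derivative formula for a simple eigenvalue appears with a \emph{plus} sign, i.e. it is a helpful term, not a ``bad third-order contribution.''

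The paper itself does something different and cleaner (this is the BCD-style viscosity trick, which the paper identifies as its new ingredient in the Hermitian setting): it defines a smooth $f$ by the global identity $\log f+\varphi(|\partial u|_g^2)+\psi(u)\equiv Q(x_0)$. This makes the linearized operator applied to the combination vanish automatically (it is a constant), so no maximum-principle inequality is needed for that part; and the upper-bound property $f\ge\lambda_1$ with $f(x_0)=\lambda_1(x_0)$ is used only in the Rayleigh-quotient computation (Lemma 3.3) to show $f_{i\bar i}\ge\tilde g_{1\bar1,i\bar i}+\sum_{q>\mu}\frac{|\tilde g_{q\bar1,i}|^2+|\tilde g_{q\bar1,\bar i}|^2}{\lambda_1-\lambda_q}$. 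So the paper's $f$ is an upper bound for $\lambda_1$ but is not obtained by perturbing $\tilde g$, and the direction of the inequality is used for a different purpose than what you suggest.

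\textbf{The endgame differs, though both work.} You invoke a Hou--Ma--Wu dichotomy on whether $\lambda_n/\lambda_1$ is bounded below. The paper avoids this: since $\varphi'=1/K$ and one may assume $\lambda_1\ge 4CK$, the prefactor $\frac14\varphi'-\frac{C}{\lambda_1}$ of the good quadratic term $\sum_p\tilde g^{i\bar i}(|u_{pi}|^2+|u_{p\bar i}|^2)$ is nonnegative; choosing $A=2(C+1)$ makes $-\psi'/2-C\ge1$ and $\psi''+C\psi'\ge0$, so the remaining inequality forces $\tr{\tilde g}{g}\le C$, hence $\lambda_n\ge c>0$, hence $\lambda_1\le C$ from the equation. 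Your dichotomy is a plausible alternative route to the same conclusion, but the paper's argument is shorter and is one of the places where its particular choice of $\varphi$ and $\psi$ is exploited.
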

\begin{proof}
Define the linearized operator $L$ by
\begin{equation} \label{defnL}
Lv = \tilde{g}^{i\ov{j}} (v_{i\ov{j}} +  a_i v_{\ov{j}} + a_{\ov{j}} v_i) = \tilde{g}^{i\ov{j}}v_{i\ov{j}} + 2\textrm{Re} \left( \tilde{g}^{i\ov{j}} a_{\ov{j}} v_i \right).
\end{equation}
Observe that
\begin{equation} \label{Lu}
Lu = \tilde{g}^{i\ov{j}} (\tilde{g}_{i\ov{j}} - g_{i\ov{j}}) = n - \tr{\tilde{g}}{g}.
\end{equation}

Let $\lambda_1 \ge \lambda_2 \ge \cdots \ge \lambda_n>0$ be the eigenvalues of $\tilde{g}_{i\ov{j}}$ with respect to $g$.  We consider the quantity
$$Q= \log \lambda_1 + \varphi( | \partial u|^2_g) + \psi(u),$$
where we define
$$\varphi(s) = \frac{s}{K}, \ s\ge 0, \quad \textrm{and} \quad \psi(t) =  e^{-At}, \ t \le 0,$$
with
$$K = \sup_M | \partial u|^2_g+1,$$
and $A>0$ to be determined.
Note that we have
$$ - \psi' \ge A>0, \quad \psi'' = -A\psi'.$$

 We assume that $Q$ achieves its maximum at $x_0\in M$.  It suffices  to show that at $x_0$, we have $\lambda_1 \le CK$ for a uniform $C$.  Hence in what follows we may assume without loss of generality that $\lambda_1$ is large compared to $K$.  We will calculate at the point $x_0$ using coordinates for which $g$ is the identity and $\tilde{g}$ is diagonal with entries $\tilde{g}_{i\ov{i}}=\lambda_i$ for $i=1, \ldots, n$.

Since $\lambda_1$ may not be smooth at $x_0$, we
define a  smooth function $f$ on $M$ by (cf. \cite[Proof of Theorem 6]{BCD})
\begin{equation} \label{defnf}
Q(x_0) = \log f + \varphi(| \partial u|^2_g) + \psi(u),
\end{equation}
where the right hand side of (\ref{defnf}) is evaluated at a general point of $M$.
Observe that $f$ satisfies
\begin{equation}\label{fcondition}
f \ge \lambda_1 \quad \textrm{on } M, \quad f = \lambda_1 \quad \textrm{at } x_0.
\end{equation}

We have the following lemma, which is a complex version of \cite[Lemma 5]{BCD}.  Here and in the sequel, we  use $\nabla_i$ or simply lower indices (after commas, when needed to avoid confusion) to denote covariant derivatives with respect to the Chern connection of $g$.

\begin{lemma} \label{lemmaviscosity}
Let $\mu$ denote the multiplicity of the largest eigenvalue of $\tilde{g}$ at $x_0$, so that $\lambda_1=\cdots = \lambda_{\mu} > \lambda_{\mu+1} \ge \cdots \ge \lambda_n$.  Then at $x_0$, for each $i$ with $1\le i \le n$,
\begin{equation} \label{fd}
\tilde{g}_{k\ov{\ell}, i} = f_i g_{k\ov{\ell}} , \quad \textrm{for } 1\le k, \ell \le \mu,
\end{equation}
and
\begin{equation} \label{sd}
 f_{i\ov{i}} \ge  \tilde{g}_{1\ov{1}, i\ov{i}} + \sum_{q>\mu} \frac{ | \tilde{g}_{q\ov{1},i}|^2 + | \tilde{g}_{q\ov{1}, \ov{i}}|^2}{\lambda_1-\lambda_q}.
\end{equation}
\end{lemma}
\begin{proof}  The proof only uses the fact that $f$ is smooth and satisfies (\ref{fcondition}).
For a smooth vector field $V = V^k \frac{\partial}{\partial z^k}$ defined in a neighborhood of $x_0$, we consider the function
$$h = \tilde{g}_{k\ov{\ell}} V^k \ov{V^{\ell}} - f g_{k\ov{\ell}} V^k \ov{V^{\ell}},$$
which is nonpositive.
For any choice of $V$ with $V^k(x_0) =0$ for $k > \mu$ we have $h(x_0)=0$ and hence
 $h$ has a local maximum at $x_0$.

For (\ref{fd}), choose $V$ with $V^k(x_0)=0$ for $k>\mu$ and $$\nabla_i V^k(x_0)=0 = \nabla_{\ov{i}} V^k(x_0), \quad \textrm{for } k \le \mu.$$
Then at $x_0$,
\[
\begin{split}
0 = h_i = {}
{} &\tilde{g}_{k\ov{\ell},i} V^k \ov{V^{\ell}} -  f_i g_{k\ov{\ell}} V^k \ov{V^{\ell}},
\end{split}
\]
and (\ref{fd}) follows since we can choose $V^k(x_0)$ for $k \le \mu$ to be whatever we like.

For (\ref{sd}) we choose $V$ with $V(x_0) = \frac{\partial}{\partial z^1}$ and
$$\nabla_i V^q(x_0) = \left\{ \begin{array}{ll} 0, \quad & q \le \mu \\ \frac{ \tilde{g}_{1\ov{q},i}}{\lambda_1-\lambda_q}, \quad & q >\mu \end{array} \right.$$
and
$$\nabla_{\ov{i}} V^q(x_0) = \left\{ \begin{array}{ll} 0, \quad & q \le \mu \\ \frac{ \tilde{g}_{1\ov{q}, \ov{i}}}{\lambda_1-\lambda_q}, \quad & q >\mu. \end{array} \right.$$
Then  at $x_0$,
\begin{equation} \label{calc1}
\begin{split}
0 \ge h_{i\ov{i}}  = {} &  \tilde{g}_{1\ov{1}, i\ov{i}} -  f_{i\ov{i}} +  \tilde{g}_{k\ov{\ell},i} (\nabla_{\ov{i}} V^k) \ov{V^{\ell}} +  \tilde{g}_{k\ov{\ell},i} V^k \ov{\nabla_i V^{\ell}}
+  \tilde{g}_{k\ov{\ell}, \ov{i}} (\nabla_i V^k) \ov{V^{\ell}} \\  & \mbox{}  +  \tilde{g}_{k\ov{\ell}, \ov{i}} V^k \ov{\nabla_{\ov{i}} V^{\ell}} + \tilde{g}_{k\ov{\ell}} \nabla_i V^k \ov{\nabla_i V^{\ell}} + \tilde{g}_{k\ov{\ell}} \nabla_{\ov{i}} V^k \ov{\nabla_{\ov{i}} V^{\ell}} \\
{} & \mbox{} - f g_{k\ov{\ell}} \nabla_i V^k \ov{\nabla_i V^{\ell}} - f g_{k\ov{\ell}} \nabla_{\ov{i}} V^k \ov{\nabla_{\ov{i}} V^{\ell}},
\end{split}
\end{equation}
noting that terms of the type
$f_i g_{k\ov{\ell}} (\nabla_{\ov{i}} V^k) \ov{V^{\ell}}$ vanish by definition of $V$ and
$$\tilde{g}_{k\ov{\ell}} (\nabla_{\ov{i}} \nabla_i V^k) \ov{V^{\ell}} - f g_{k\ov{\ell}}  (\nabla_{\ov{i}} \nabla_i V^k) \ov{V^{\ell}}=0=\tilde{g}_{k\ov{\ell}} V^k \ov{\nabla_i \nabla_{\ov{i}}V^{\ell}} - f g_{k\ov{\ell}}  V^k \ov{\nabla_i \nabla_{\ov{i}} V^{\ell}}$$
since $fg_{1\ov{1}} = \lambda_1 = \tilde{g}_{1\ov{1}}$ at $x_0$. Continuing from (\ref{calc1}), using the definition of $V$,
\[
\begin{split}
0 \ge {} &  \tilde{g}_{1\ov{1}, i\ov{i}} -  f_{i\ov{i}} + 2\sum_{q > \mu} \frac{ | \tilde{g}_{q\ov{1},i}|^2}{\lambda_1-\lambda_q} + 2\sum_{q > \mu} \frac{ |  \tilde{g}_{q\ov{1},\ov{i}}|^2}{\lambda_1-\lambda_q} \\
{} & \mbox{} + \sum_{q>\mu} \lambda_q \frac{ | \tilde{g}_{1\ov{q},i}|^2}{(\lambda_1-\lambda_q)^2}  +  \sum_{q>\mu} \lambda_q \frac{ |  \tilde{g}_{1\ov{q},\ov{i}}|^2}{(\lambda_1-\lambda_q)^2} \\
{} & \mbox{} - \lambda_1 \sum_{q>\mu}  \frac{ |  \tilde{g}_{1\ov{q},i}|^2}{(\lambda_1-\lambda_q)^2}  -  \lambda_1 \sum_{q>\mu} \frac{ |  \tilde{g}_{1\ov{q},\ov{i}}|^2}{(\lambda_1-\lambda_q)^2} \\
= {} &   \tilde{g}_{1\ov{1},i\ov{i}} - f_{i\ov{i}} + \sum_{q>\mu} \frac{ |  \tilde{g}_{q\ov{1},i}|^2 + |  \tilde{g}_{q\ov{1},\ov{i}}|^2}{\lambda_1-\lambda_q},
\end{split}
\]
as required.
\end{proof}

Differentiating (\ref{pde}) we obtain
\begin{equation} \label{1d}
\tilde{g}^{i\ov{i}} \tilde{g}_{i\ov{i}, k} = \tilde{g}^{i\ov{i}} (u_{i\ov{i}k} + a_{i,k} u_{\ov{i}} + a_i u_{k\ov{i}} +a_{\ov{i},k} u_i + a_{\ov{i}} u_{ik})= F_k,
\end{equation}
where here and henceforth we are computing at the point $x_0$.
Differentiating again, and setting $k=1$,
\begin{equation} \label{F11}
\tilde{g}^{i\ov{i}} \tilde{g}_{i\ov{i},1\ov{1}} - \tilde{g}^{i\ov{i}} \tilde{g}^{j\ov{j}} \tilde{g}_{i\ov{j}, 1}  \tilde{g}_{j\ov{i},\ov{1}}  = F_{1\ov{1}}.
\end{equation}

Now apply $\nabla_i$ to  the defining equation (\ref{defnf}) of $f$ to obtain
\begin{equation} \label{Qi}
0 =  \frac{f_i}{\lambda_1} + \varphi' \left( u_p u_{\ov{p}i} +  u_{pi} u_{\ov{p}} \right) + \psi' u_i.
\end{equation}
Next apply the operator $L$, as defined in (\ref{defnL}), to the defining equation of $f$ to obtain,
\begin{equation} \label{LQ}
\begin{split}
0 = {} &  \frac{\tilde{g}^{i\ov{i}}f_{i\ov{i}}}{\lambda_1} - \frac{\tilde{g}^{i\ov{i}} | f_i|^2}{\lambda_1^2} 
 + \varphi' \sum_{p} \tilde{g}^{i\ov{i}} \left( | u_{p\ov{i}}|^2 + |u_{pi}|^2 \right) \\{}& \mbox{} + \varphi'  \tilde{g}^{i\ov{i}} (u_{pi\ov{i}} u_{\ov{p}} + u_{\ov{p} i \ov{i}} u_p)
  + \psi''   \tilde{g}^{i\ov{i}} | u_i|^2 + \psi' (n- \tr{\tilde{g}}{g}) \\ & \mbox{}+
2\re \left(  \tilde{g}^{i\ov{i}} a_{\ov{i}} \frac{f_i}{\lambda_1} \right)
+ 2\varphi' \re \left(  \tilde{g}^{i\ov{i}} a_{\ov{i}} \left(  u_p u_{\ov{p}i} +  u_{pi} u_{\ov{p}} \right) \right),
\end{split}
\end{equation}
where we have made use of  (\ref{Lu}).
We wish to compare $\sum_i \tilde{g}^{i\ov{i}} f_{i\ov{i}}$ and $\sum_i \tilde{g}^{i\ov{i}} \tilde{g}_{i\ov{i},1\ov{1}}$.  From Lemma \ref{lemmaviscosity},
\begin{equation} \label{lambda1d}
f_i = \tilde{g}_{11,\ov{i}}, \ \textrm{and } f_{i\ov{i}} \ge \tilde{g}_{1\ov{1}, i\ov{i}} + \sum_{q >\mu} \frac{ | \tilde{g}_{1\ov{q}, i}|^2 + | \tilde{g}_{q\ov{1}, i}|^2}{\lambda_1- \lambda_q}.
\end{equation}
To compare $\tilde{g}_{11,i\ov{i}}$ and $\tilde{g}_{i\ov{i},1\ov{1}}$ we first compute, using $T^{k}_{ij}$ and $R_{k\ov{\ell}i}^{\ \ \ \, p}$ to denote the torsion and Chern curvature tensors of $g$ respectively (see for example \cite{TW5}),
\begin{equation} \label{u4}
\begin{split}
u_{i\ov{i}1\ov{1}} = {} & u_{i\ov{i}\ov{1}1} + R_{1\ov{1}i}^{\ \ \ \, p} u_{p\ov{i}} - R_{1\ov{1} \ \, \ov{i}}^{\ \ \, \ov{q}} u_{i\ov{q}} \\
= {} & u_{i\ov{1}\ov{i}1} + R_{1\ov{1}i}^{\ \ \ \, p} u_{p\ov{i}} - R_{1\ov{1} \ \, \ov{i}}^{\ \ \, \ov{q}} u_{i\ov{q}} + \nabla_1 \ov{T^q_{i1}} u_{i\ov{q}} + \ov{T^q_{i1}} u_{i\ov{q}1} \\
= {} & u_{\ov{1}i1\ov{i}} + R_{1\ov{1}i}^{\ \ \ \, p} u_{p\ov{i}} - R_{1\ov{1} \ \, \ov{i}}^{\ \ \, \ov{q}} u_{i\ov{q}} + \nabla_1 \ov{T^q_{i1}} u_{i\ov{q}} + \ov{T^q_{i1}} u_{i\ov{q}1}  \\ {} & + R_{1\ov{i} \ \, \ov{1}}^{\ \ \, \ov{q}} u_{\ov{q}i} - R_{1\ov{i}i}^{\ \ \  p} u_{\ov{1}p} \\
= {} & u_{1\ov{1} i\ov{i}}  + R_{1\ov{1}i}^{\ \ \ \, p} u_{p\ov{i}} - R_{1\ov{1} \ \, \ov{i}}^{\ \ \, \ov{q}} u_{i\ov{q}} + \nabla_1 \ov{T^q_{i1}} u_{i\ov{q}} + \ov{T^q_{i1}} u_{i\ov{q}1}  \\
{} &  + R_{1\ov{i} \ \, \ov{1}}^{\ \ \, \ov{q}} u_{\ov{q}i} - R_{1\ov{i}i}^{\ \ \  p} u_{\ov{1}p} + \nabla_{\ov{i}} T^q_{i1} u_{\ov{1}q} + T^q_{i1} u_{\ov{1}q \ov{i}},
\end{split}
\end{equation}
where for the second inequality and fourth inequalities, we used the formulae
\begin{equation} \label{u3}
u_{j\ov{\ell}  \ov{k}} - u_{j \ov{k}\ov{\ell}}= \ov{T^q_{\ell k}} u_{j\ov{q}}, \quad u_{\ov{j} \ell k} - u_{\ov{j} k \ell} = T^q_{\ell k} u_{\ov{j}q}.
\end{equation}

From (\ref{u4}) and the definition of $\tilde{g}_{i\ov{j}}$,
\[
\begin{split}
\tilde{g}^{i\ov{i}} \tilde{g}_{1\ov{1},i\ov{i}} = {} & \tilde{g}^{i\ov{i}} \tilde{g}_{i\ov{i},1\ov{1}} + \tilde{g}^{i\ov{i}} \{ u_{1\ov{1}i\ov{i}} - u_{i\ov{i}1\ov{1}}
 + a_{1,i\ov{i}} u_{\ov{1}} - a_{i,1\ov{1}} u_{\ov{i}} \\
{} & + a_{\ov{1},i\ov{i}} u_1 - a_{\ov{i},1\ov{1}} u_i + a_{1,i} u_{\ov{1}\ov{i}} - a_{i,1} u_{\ov{i}\ov{1}} + a_{1,\ov{i}} u_{\ov{1} i} - a_{i,\ov{1}} u_{\ov{i}1} \\
{} & + a_{\ov{1},i} u_{1\ov{i}} - a_{\ov{i},1}u_{i\ov{1}} + a_{\ov{1}, \ov{i}} u_{1i} - a_{\ov{i},\ov{1}} u_{i1} + a_1 u_{\ov{1}i\ov{i}} - a_i u_{\ov{i}1 \ov{1}} \\
{} & + a_{\ov{1}} u_{1i\ov{i}} - a_{\ov{i}} u_{i1\ov{1}} \} \\
\ge{} & \tilde{g}^{i\ov{i}} \tilde{g}_{i\ov{i},1\ov{1}} + \tilde{g}^{i\ov{i}} \left( \ov{T^q_{1i}} u_{i\ov{q}1} + T^q_{1i} u_{\ov{1}q\ov{i}} + a_1 u_{\ov{1}i\ov{i}} - a_i u_{\ov{i}1 \ov{1}} + a_{\ov{1}} u_{1i\ov{i}} - a_{\ov{i}} u_{i1\ov{1}} \right) \\
& -  \sum_{p} \tilde{g}^{i\ov{i}} \left( |u_{p\ov{i}}|^2 + |u_{pi}|^2 \right) - C( \tr{\tilde{g}}{g} )( \tr{g}{\tilde{g}}),
\end{split}
\]
where for the last line we used the assumption that $K \le \lambda_1 \le \tr{g}{\tilde{g}}$, and the uniform lower bound of $\tr{g}{\tilde{g}}$ which follows from our equation (\ref{pde}).

Next, observe that
\begin{equation} \label{comm1}
\begin{split}
& u_{i\ov{j} k} = u_{k\ov{j}i} + T^p_{ik} u_{\ov{j}p}= u_{ki\ov{j}}  + T^p_{ik} u_{\ov{j}p}- u_p R_{i\ov{j} k}^{\ \ \ \, p}.
\end{split}
\end{equation}
Then, using this and (\ref{1d}),
\[
\begin{split}
\tilde{g}^{i\ov{i}} (a_1 u_{\ov{1} i \ov{i}} + a_{\ov{1}} u_{1i\ov{i}}) = {} & 2 \re\left( \tilde{g}^{i\ov{i}} a_{\ov{1}} u_{1i \ov{i}} \right) - a_{1} u_{\ov{q}}  \tilde{g}^{i\ov{i}} R_{i\ov{i}\  \, \ov{1}}^{\  \, \, \ov{q}} \\
= {} & 2 \re \left( \tilde{g}^{i\ov{i}} a_{\ov{1}} \left( u_{i\ov{i} 1}  -T^p_{i1} u_{\ov{i}p} + u_p R_{i\ov{i} 1}^{\ \ \ \, p} \right) \right) - a_{1} u_{\ov{q}}  \tilde{g}^{i\ov{i}} R_{i\ov{i}\  \, \ov{1}}^{\  \, \, \ov{q}} \\
= {} & 2 \re \left( a_{\ov{1}} F_{1} - \tilde{g}^{i\ov{i}} a_{\ov{1}} (T^p_{i1} u_{\ov{i}p} - u_pR_{i\ov{i}1}^{\ \ \ \, p}  \right. \\ {} & \left. \mbox{} + a_{i,1} u_{\ov{i}} + a_i u_{1\ov{i}} + a_{\ov{i},1} u_i + a_{\ov{i}} u_{i1})
\right)   - a_{1} u_{\ov{q}}  \tilde{g}^{i\ov{i}} R_{i\ov{i}\  \, \ov{1}}^{\  \, \, \ov{q}}.
\end{split}
\]
We also have
\begin{equation*}
\begin{split}
\lefteqn{\tilde{g}^{i\ov{i}} ( \ov{T^q_{1i}} u_{i\ov{q}1} + T^q_{1i} u_{\ov{1}q\ov{i}})} \\= {} & 2 \re \left( \tilde{g}^{i\ov{i}} \ov{T^q_{1i}} u_{1\ov{q}i}   \right) + \tilde{g}^{i\ov{i}} \ov{T^q_{1i}} T^p_{1i} u_{\ov{q}p} \\
= {} & 2 \re \left( \tilde{g}^{i\ov{i}} \ov{T^q_{1i}} \left( \tilde{g}_{1\ov{q},i} - a_{1,i} u_{\ov{q}} - a_1 u_{\ov{q}i} - a_{\ov{q},i} u_1 - a_{\ov{q}} u_{1i}   \right)  \right) \\ {} & \mbox{} + \tilde{g}^{i\ov{i}} \ov{T^q_{1i}} T^p_{i1} u_{\ov{q}p}.
\end{split}
\end{equation*}
Combining the above with (\ref{F11}) gives
\begin{equation} \label{gii11ii}
\begin{split}
\tilde{g}^{i\ov{i}} \tilde{g}_{1\ov{1},i\ov{i}}
\ge{} & \tilde{g}^{i\ov{i}} \tilde{g}^{j\ov{j}} \tilde{g}_{i\ov{j}, 1}  \tilde{g}_{j\ov{i},\ov{1}}  + 2 \re \left( \tilde{g}^{i\ov{i}} \ov{T^q_{1i}}  \tilde{g}_{1\ov{q},i} \right) - \tilde{g}^{i\ov{i}} \{   a_i u_{\ov{i}1 \ov{1}}  + a_{\ov{i}} u_{i1\ov{1}} \} \\
& -  2\sum_{i,p} \tilde{g}^{i\ov{i}} \left( |u_{p\ov{i}}|^2 + |u_{pi}|^2 \right) - C (\tr{\tilde{g}}{g} )( \tr{g}{\tilde{g}} ).
\end{split}
\end{equation}
Next, using again Lemma \ref{lemmaviscosity},
\begin{equation} \label{2Relambda1i}
\begin{split}
2\re \left( \tilde{g}^{i\ov{i}} a_{\ov{i}} \frac{f_i}{\lambda_1} \right) = {} & 2\re \left( \tilde{g}^{i\ov{i}} a_{\ov{i}} \frac{\tilde{g}_{1\ov{1},i}}{\lambda_1} \right)  \\=
{} &  \frac{\tilde{g}^{i\ov{i}}}{\lambda_1} \left( a_{\ov{i}} u_{i1\ov{1}}  + a_i u_{\ov{i} 1 \ov{1}} +a_{\ov{i}}T^p_{1i} u_{\ov{1}p}- a_{\ov{i}} u_p R_{1\ov{1}i}^{\ \ \ \, p} + a_i \ov{T^q_{1i}} u_{1\ov{q}}\right) \\
{} & + 2\re \left( \frac{\tilde{g}^{i\ov{i}}}{\lambda_1} a_{\ov{i}} \{ a_{1,i} u_{\ov{1}} + a_1 u_{\ov{1}i} + a_{\ov{1},i}u_1 +a_{\ov{1}}u_{1i} \} \right) \\
\ge {}  & \frac{\tilde{g}^{i\ov{i}}}{\lambda_1} \left( a_{\ov{i}} u_{i1\ov{1}}  + a_i u_{\ov{i} 1 \ov{1}} \right) - \frac{1}{\lambda_1}  \sum_{p}   \tilde{g}^{i\ov{i}} \left( |u_{p\ov{i}}|^2 + |u_{pi}|^2 \right)   \\ & \mbox{}  - C \tr{\tilde{g}}{g},
\end{split}
\end{equation}
and note that the terms involving three derivatives of $u$ exactly match those from (\ref{gii11ii}), after multiplying by $-1/\lambda_1$.

Now from (\ref{1d}) we have,
\[
\begin{split}
\tilde{g}^{i\ov{i}} u_{ i\ov{i}p} u_{\ov{p}} = {}&  F_p u_{\ov{p}} - \tilde{g}^{i\ov{i}} a_{i,p} u_{\ov{i}} u_{\ov{p}} - \tilde{g}^{i\ov{i}} a_i u_{\ov{i}p} u_{\ov{p}} - \tilde{g}^{i\ov{i}} a_{\ov{i},p} u_i u_{\ov{p}} - \tilde{g}^{i\ov{i}} a_{\ov{i}} u_{ip} u_{\ov{p}}.
\end{split}
\]
Hence, making use of (\ref{comm1}), and recalling that $\varphi' =1/K$,
\begin{equation} \label{upii}
\begin{split}
\lefteqn{ \varphi' \tilde{g}^{i\ov{i}} (u_{pi\ov{i}} u_{\ov{p}} + u_{\ov{p}i\ov{i}} u_p)} \\ =  {} & \varphi' \tilde{g}^{i\ov{i}} \left( u_{i\ov{i}p} u_{\ov{p}} + u_{\ov{i}i \ov{p}}u_p +   u_r u_{\ov{p}} R_{i\ov{i}p}^{\ \ \ \, r} -  T^r_{ip}u_{\ov{p}} u_{\ov{i}r} + \ov{T^q_{pi}} u_pu_{i\ov{q}}  \right) \\  =  {} &
2\varphi' \re \left(F_p u_{\ov{p}} - \tilde{g}^{i\ov{i}} a_{i,p} u_{\ov{i}} u_{\ov{p}} - \tilde{g}^{i\ov{i}} a_i u_{\ov{i}p} u_{\ov{p}}  - \tilde{g}^{i\ov{i}} a_{\ov{i},p} u_i u_{\ov{p}} - \tilde{g}^{i\ov{i}} a_{\ov{i}} u_{ip} u_{\ov{p}}  \right) \\ & \mbox{}  +  \varphi' \tilde{g}^{i\ov{i}} u_r u_{\ov{p}} R_{i\ov{i}p}^{\ \ \ \, r}  - 2 \varphi' \re\left( \tilde{g}^{i\ov{i}} T^r_{ip}u_{\ov{p}} u_{\ov{i}r} \right) \\
\ge {} & - \frac{\varphi'}{4} \sum_p \tilde{g}^{i\ov{i}} (|u_{p\ov{i}}|^2 + |u_{pi}|^2 ) - C \tr{\tilde{g}}{g}.
\end{split}
\end{equation}
We also have
\begin{equation} \label{phi2r}
2\varphi' \re \left(  \tilde{g}^{i\ov{i}} a_{\ov{i}} \left(  u_p u_{\ov{p}i} +  u_{pi} u_{\ov{p}} \right) \right) \ge  -  \frac{\varphi'}{4} \sum_p \tilde{g}^{i\ov{i}} (|u_{p\ov{i}}|^2 + |u_{pi}|^2 ) - C \tr{\tilde{g}}{g}.
\end{equation}

Combining (\ref{LQ}), (\ref{lambda1d}), (\ref{gii11ii}), (\ref{2Relambda1i}), (\ref{upii}) and (\ref{phi2r}) gives
\begin{equation} \label{tog}
\begin{split}
0 \ge {} & \frac{\tilde{g}^{i\ov{i}} \tilde{g}^{j\ov{j}} \tilde{g}_{i\ov{j},1} \tilde{g}_{j\ov{i}, \ov{1}}}{\lambda_1}  +  \sum_{q>\mu} \frac{ \tilde{g}^{i\ov{i}}( | \tilde{g}_{1\ov{q}, i}|^2 + | \tilde{g}_{q\ov{1}, i}|^2)}{\lambda_1(\lambda_1- \lambda_q)} -  \frac{\tilde{g}^{i\ov{i}} | \tilde{g}_{1\ov{1}, i}|^2}{\lambda_1^2} \\ {} & +  \frac{2\re \left( \tilde{g}^{i\ov{i}} \ov{T^q_{1i}}  \tilde{g}_{1\ov{q},i} \right)}{\lambda_1}  \mbox{} +  \left(\frac{1}{2} \varphi' - \frac{C}{\lambda_1} \right)\sum_{p} \tilde{g}^{i\ov{i}} \left( | u_{p\ov{i}}|^2 + |u_{pi}|^2 \right)  \\ & \mbox{}
  + \psi''   \tilde{g}^{i\ov{i}} | u_i|^2 + \psi' (n- \tr{\tilde{g}}{g}) - C \tr{\tilde{g}}{g} \\
\end{split}
\end{equation}
for  $C$ a universal constant (depending on $F$, $a$ etc).

We need to get a lower bound of
\begin{equation} \label{dontforget}
\frac{\tilde{g}^{i\ov{i}} \tilde{g}^{j\ov{j}} \tilde{g}_{i\ov{j},1} \tilde{g}_{j\ov{i}, \ov{1}}}{\lambda_1} -  \frac{\tilde{g}^{i\ov{i}} | \tilde{g}_{1\ov{1}, i}|^2}{\lambda_1^2}  \ge \sum_{i=2}^n \frac{\tilde{g}^{i\ov{i}} \tilde{g}_{i\ov{1}, 1} \tilde{g}_{1\ov{i},\ov{1}}}{\lambda_1^2} - \sum_{i=2}^n \frac{\tilde{g}^{i\ov{i}} | \tilde{g}_{1\ov{1},i}|^2}{\lambda_1^2},
\end{equation}
where we have discarded the terms with $j\neq 1$.
But note that
\[
\begin{split}
\tilde{g}_{i\ov{1},1} = {} & \tilde{g}_{1\ov{1},i} + \lambda_1  X_{1\ov{1}i},
\end{split}
\]
where $X_{1\ov{1}i}$ is defined by
$$X_{1\ov{1}i}:=\frac{1}{\lambda_1} \left(
 T^p_{i1} u_{\ov{1}p} + a_{i,1} u_{\ov{1}} + a_i u_{1\ov{1}}+a_{\ov{1},1} u_i - a_{1,i} u_{\ov{1}} - a_1 u_{i\ov{1}} - a_{\ov{1},i}u_1 + a_{\ov{1}} T_{1i}^k u_k\right),$$
and satisfies $|X_{1\ov{1}i}|\le C$ for a uniform $C$.  In the above, we used (\ref{comm1}) and the formula
$$u_{ij}-u_{ji} = T^k_{ji}u_k.$$

Then
\begin{equation} \label{uno}
\begin{split}
\sum_{i=2}^n \frac{\tilde{g}^{i\ov{i}} \tilde{g}_{i\ov{1}, 1} \tilde{g}_{1\ov{i},\ov{1}}}{\lambda_1^2}  \ge {} &  \sum_{i=2}^n  \frac{\tilde{g}^{i\ov{i}} | \tilde{g}_{1\ov{1},i}|^2}{\lambda_1^2} + 2\textrm{Re} \left( \sum_{i=2}^n \frac{\tilde{g}^{i\ov{i}} g_{1\ov{1}, i} \ov{X_{1\ov{1}i}}}{\lambda_1} \right).
\end{split}
\end{equation}
To deal with the second term, we use (\ref{Qi}) to compute
\begin{equation} \label{dos}
\begin{split}
\lefteqn{2 \textrm{Re}\left( \sum_{i=2}^n \frac{\tilde{g}^{i\ov{i}} \tilde{g}_{1\ov{1}, i} \ov{X_{1\ov{1}i}}}{\lambda_1}  \right)} \\ ={} & - 2 \textrm{Re} \left( \sum_{i=2}^n \tilde{g}^{i\ov{i}}(\varphi' (u_p u_{\ov{p}i} + u_{pi}u_{\ov{p}}) + \psi' u_i) \ov{X_{1\ov{1}i}} \right) \\
\ge {} & - \frac{\varphi'}{8} \sum_p \tilde{g}^{i\ov{i}}(|u_{p\ov{i}}|^2+|u_{pi}|^2)  - C\tr{\tilde{g}}{g}  + \psi' ( C \tilde{g}^{i\ov{i}}|u_i|^2 + \frac{1}{4} \tr{\tilde{g}}{g}),
\end{split}
\end{equation}
where we recall that $\psi'<0$.

Next we deal with the fourth term on the right hand side of (\ref{tog}).  From Lemma \ref{lemmaviscosity} we have $\tilde{g}_{1\ov{q},i}=0$ for $1<q\le \mu$ and hence
\begin{equation} \label{tres}
\begin{split}
 \frac{2\re \left( \tilde{g}^{i\ov{i}} \ov{T^q_{1i}}  \tilde{g}_{1\ov{q},i} \right)}{\lambda_1} = {} &  \frac{2\re \left( \tilde{g}^{i\ov{i}} \ov{T^1_{1i}}  \tilde{g}_{1\ov{1},i} \right)}{\lambda_1} + 2 \sum_{q>\mu}  \frac{\re \left( \tilde{g}^{i\ov{i}} \ov{T^q_{1i}}  \tilde{g}_{1\ov{q},i} \right)}{\lambda_1}
 \end{split}
 \end{equation}
 But using the same argument as in (\ref{dos}), replacing $|X_{1\ov{1}i}|\le C$ by $|T^1_{1i}|\le C$, we obtain
 \begin{equation} \label{tres2}
 \begin{split}
  \frac{2\re \left( \tilde{g}^{i\ov{i}} \ov{T^1_{1i}}  \tilde{g}_{1\ov{1},i} \right)}{\lambda_1} \ge {} & - \frac{\varphi'}{8}  \sum_p \tilde{g}^{i\ov{i}}(|u_{p\ov{i}}|^2+|u_{pi}|^2) - C\tr{\tilde{g}}{g}   \\ {} & + \psi' ( C \tilde{g}^{i\ov{i}}|u_i|^2 + \frac{1}{4} \tr{\tilde{g}}{g}).
  \end{split}
  \end{equation}
  On the other hand we have
  \begin{equation} \label{tres3}
  \begin{split}
  2 \sum_{q>\mu}  \frac{\re \left( \tilde{g}^{i\ov{i}} \ov{T^q_{1i}}  \tilde{g}_{1\ov{q},i} \right)}{\lambda_1} \ge {} & - \sum_{q>\mu} \frac{\tilde{g}^{i\ov{i}} |\tilde{g}_{1\ov{q},i}|^2}{\lambda_1(\lambda_1-\lambda_q)}
  - \sum_{q >\mu} \tilde{g}^{i\ov{i}} |T^q_{1i}|^2 \frac{(\lambda_1-\lambda_q)}{\lambda_1}   \\
  \ge {} &  - \sum_{q>\mu} \frac{\tilde{g}^{i\ov{i}} |\tilde{g}_{1\ov{q},i}|^2}{\lambda_1(\lambda_1-\lambda_q)} - C\tr{\tilde{g}}{g}.
  \end{split}
  \end{equation}



Combining (\ref{tog}) with (\ref{dontforget}), (\ref{uno}), (\ref{dos}), (\ref{tres}), (\ref{tres2}) and (\ref{tres3})   we obtain for a uniform constant $C$,
\[
\begin{split}
0 \ge {} & \left( \frac{1}{4} \varphi' - \frac{C}{\lambda_1} \right) \sum_{i,p} \tilde{g}^{i\ov{i}} \left( |u_{p\ov{i}}|^2+|u_{pi}|^2 \right) + \left(-\psi'/2 - C \right) \tr{\tilde{g}}{g}  \\ {} & + (\psi'' + C \psi')\tilde{g}^{i\ov{i}}|u_i|^2 + \psi' n.
\end{split}
\]
But since we may assume that $\lambda_1 \ge 4C K$, the first term on the right hand side is nonnegative.
 Pick $A=2(C+1)$ so that $-\psi'/2 -C \ge 1$ and $\psi'' + C\psi' \ge 0$.  Then $\tr{\tilde{g}}{g}$ and hence $\lambda_1$ is uniformly bounded from above at the maximum of $Q$, and the result follows.
\end{proof}

\medskip
\noindent
{\bf Remark.} \
In the proof above we used a viscosity type argument to deal with the non-differentiability of the largest eigenvalue $\lambda_1$.  There are other methods to deal with this issue:  one is to use a perturbation argument as in \cite{Sz, STW}; another is to replace $\lambda_1$ by a carefully chosen quadratic function of $\tilde{g}_{i\ov{j}}$ as in \cite{TW5}.

\section{Proof of the main theorem} \label{sectionproof}

\subsection{Higher order estimates}\label{higher} First, we discuss the {\em a priori} higher order estimates, in the same setting as Theorems \ref{0th} and \ref{2nd}.
Thanks to the estimates in these Theorems, a blowup argument can be employed exactly as in \cite{DK,Sz,STW,TW3} to obtain that
$\sup_M |\de u|_g\leq C,$ and therefore also $\sup_M\tr{g}{\ti{g}}\leq C.$ Here we use the classical Liouville Theorem stating that a bounded plurisubharmonic function on $\mathbb{C}^n$ is constant (indeed, by restricting to complex lines, this reduces to the well-known fact that a bounded subharmonic function in $\mathbb{C}$ is constant).

The PDE \eqref{pde} then implies that $\ti{g}$ is uniformly equivalent to $g$, at which point we can then apply the Evans-Krylov theory \cite{E, K, Tr} (see also \cite{TWWY}) to obtain uniform {\em a priori} $C^{2,\alpha}$ bounds on $u$, for some uniform $0<\alpha<1$. Differentiating the equation and using Schauder theory, we then deduce uniform {\em a priori} $C^k$ bounds for all $k\geq 0.$

\subsection{Existence of a solution} We employ the continuity method. For $t\in [0,1]$ we consider the family of equations for $(u_t,b_t)$
\begin{equation} \label{mat}
\begin{split}
& \det (g_{i\ov{j}} +  a_i u_{t,\ov{j}} + a_{\ov{j}} u_{t,i} + u_{t,i\ov{j}}) = e^{tF+b_t}\det (g_{i\ov{j}}), \\
&  \mathrm{with\ } (g_{i\ov{j}} + a_i u_{t,\ov{j}} + a_{\ov{j}} u_{t,i} + u_{t,i\ov{j}})>0.
\end{split}
\end{equation}
Suppose we have a solution for $t=\hat{t}$ and write $$\hat{\omega}=\omega+\mn a\wedge\db u_{\hat{t}} -\mn\ov{a}\wedge\de u_{\hat{t}}+\ddbar u_{\hat{t}},$$
and $\hat{H}$ for the linearized operator defined as in \eqref{hhat}. By the same argument of Gauduchon \cite{Ga0} that was mentioned earlier, we may find a smooth function $v$,
normalized by $\int_M e^v\hat{\omega}^n=1$, such that
$$\int_M \hat{H}(\psi) e^v\hat{\omega}^n=0,$$
for all smooth functions $\psi$, i.e. $e^v$ generates the kernel of the adjoint $\hat{H}^*$ of $\hat{H}$ (with respect to the $L^2$ inner product with volume form $\hat{\omega}^n$). Fix $0<\alpha<1$ and consider the operator
\[\begin{split}
\Upsilon(\psi)&=\log\frac{(\hat{\omega}+\mn a\wedge\db\psi-\mn\ov{a}\wedge\de\psi+\ddbar\psi)^n}{\hat{\omega}^n}\\
&-\log\left(\int_M e^v(\hat{\omega}+\mn a\wedge\db\psi-\mn\ov{a}\wedge\de\psi+\ddbar\psi)^n\right),
\end{split}\]
mapping $C^{3,\alpha}$ functions $\psi$ with zero average (and such that $\hat{\omega}+\mn a\wedge\db\psi-\mn\ov{a}\wedge\de\psi+\ddbar\psi>0$) to the space of $C^{1,\alpha}$ functions $w$ satisfying $\int_M e^{w+v}\hat{\omega}^n=1$ (whose tangent space at $0$ consists precisely of $C^{1,\alpha}$ functions orthogonal to the kernel of $\hat{H}^*$). For any $C^{3,\alpha}$ function $\zeta$ we have
$$\int_M e^v\hat{H}(\zeta)\hat{\omega}^n=\int_M \zeta\hat{H}^*(e^v)\hat{\omega}^n=0,$$
hence the linearization of $\Upsilon$ at $0$ is $\hat{H}$. Thanks to the Fredholm alternative, $\hat{H}$ is an isomorphism of the tangent spaces, and so the Inverse Function Theorem provides us with $C^{3,\alpha}$ functions $\psi_t$ for $t$ near $\hat{t}$ which satisfy
$$\Upsilon(\psi_t)=(t-\hat{t})F-\log\left(\int_M e^{(t-\hat{t})F}e^v\hat{\omega}^n\right),$$
so that $u_t=u_{\hat{t}}+\psi_t$ solve \eqref{mat} for some $b_t\in\mathbb{R}$. Lastly, differentiating \eqref{mat} and using Schauder estimates and bootstrapping, we easily see that our $C^{3,\alpha}$ solutions are in fact smooth.

This establishes that the set of all $t\in [0,1]$ for which we have a solution $(u_t,b_t)$ of \eqref{mat} is open (and nonempty, since we can take $(u_0,b_0)=(0,0)$). At this point we can also impose that $\sup_Mu_t=0$ by adding a $t$-dependent constant. To show that the set of such $t\in[0,1]$ is also closed, it suffices to prove {\em a priori} estimates for $u_t$ (in $C^k$ for all $k\geq 0$) and $b_t$. The bound $|b_t|\leq \sup_M|F|$ is elementary by the maximum principle, and then the estimates for $u_t$ follow from section \ref{higher} above.

\subsection{Uniqueness} In the setting of the main theorem \ref{main}, uniqueness of $b$ and $u$ follows from a simple maximum principle argument, see e.g. \cite{CTW}.


\begin{thebibliography}{99}
\bibitem{Be} Bedford, E. \emph{Survey of pluri-potential theory}, in \emph{Several complex variables} (Stockholm, 1987/1988), 48--97, Math. Notes  {\bf 38}, Princeton Univ. Press, Princeton, NJ, 1993.
\bibitem{B} B{\l}ocki, Z., {\em On uniform estimate in Calabi-Yau theorem}, Sci. China Ser. A {\bf 48} (2005), suppl., 244--247.
\bibitem{B2} B{\l}ocki, Z., {\em On the uniform estimate in the Calabi-Yau theorem, II}, Sci. China Math. {\bf 54} (2011), no.7, 1375--1377.
\bibitem{B3} B{\l}ocki, Z., {\em A gradient estimate in the Calabi-Yau theorem}, Math. Ann. {\bf 344} (2009), no. 2, 317--327.
\bibitem{BCD} Brendle, S., Choi, K. and Daskalopoulos, P., {\em Asymptotic behavior of flows by powers of the Gaussian curvature}, Acta Math. {\bf 219} (2017), no.1, 1--16.
\bibitem{Ch} Cherrier, P., {\em \'Equations de Monge-Amp\`ere sur les vari\'et\'es Hermitiennes compactes}, Bull. Sc. Math (2) {\bf 111} (1987), 343--385.
\bibitem{CTW} Chu, J., Tosatti, V. and Weinkove, B., {\em The Monge-Amp\`ere equation for non-integrable almost complex structures}, to appear in J. Eur. Math. Soc. (JEMS).
\bibitem{DK} Dinew, S. and Ko{\l}odziej, S., {\em Liouville and Calabi-Yau type theorems for complex Hessian equations}, Amer. J. Math. {\bf 139} (2017), no.2, 403--415.
\bibitem{E} Evans, L.C., {\em Classical solutions of fully nonlinear, convex, second order elliptic equations}, Comm. Pure Appl. Math. {\bf 25} (1982), 333--363.
\bibitem{FGZ} Feng, K., Ge, H. and Zheng, T., {\em The Dirichlet problem of fully nonlinear equations on Hermitian manifolds}, preprint, arXiv:1905.02412.
\bibitem{FWW1} Fu, J., Wang, Z. and Wu, D., {\em Form-type Calabi-Yau equations}, Math. Res. Lett. {\bf 17} (2010), no.5, 887--903.
\bibitem{FWW2} Fu, J., Wang, Z. and Wu, D., {\em Form-type Calabi-Yau equations on K\"ahler manifolds of nonnegative orthogonal bisectional curvature},  Calc. Var. Partial Differential Equations {\bf 52} (2015), no.1-2, 327--344.
\bibitem{FY} Fu, J. and Yau, S.-T., {\em The theory of superstring with flux on non-K\"ahler manifolds and the complex Monge-Amp\`ere equation}, J. Differential Geom. {\bf 78} (2008), no.3, 369--428.
\bibitem{Ga0} Gauduchon, P., {\em Le th\'eor\`eme de l'excentricit\'e nulle}, C. R. Acad. Sci. Paris S\'er. A-B {\bf 285} (1977), no.5, A387--390.
\bibitem{Ga} Gauduchon, P., {\em La $1$-forme de torsion d'une vari\'et\'e hermitienne compacte}, Math. Ann. {\bf 267} (1984), 495--518.
\bibitem{GL} Guan, B. and Li, Q., {\em Complex Monge-Amp\`ere equations and totally real submanifolds}, Adv. Math. {\bf 225} (2010), no.3, 1185--1223.
\bibitem{GN} Guan, B. and Nie, X., {\em Fully nonlinear elliptic equations with gradient terms on Hermitian manifolds}, preprint.
\bibitem{HL} Harvey, F. R. and Lawson, H. B., {\em Geometric plurisubharmonicity and convexity: an introduction}, Adv. Math. {\bf 230} (2012), no.4-6, 2428--2456.
\bibitem{HMW} Hou, Z., Ma, X.-N. and Wu, D., {\em A second order estimate for complex Hessian equations on a compact K\"ahler manifold}, Math. Res. Lett. {\bf 17} (2010), no.3, 547--561.
\bibitem{K}  Krylov, N.V., {\em Boundedly nonhomogeneous elliptic and parabolic equations}, Izvestia Akad. Nauk. SSSR {\bf 46} (1982), 487--523; English translation in. Math. USSR Izv. {\bf 20} (1983), no. 3, 459--492.
\bibitem{PPZ1} Phong, D.H., Picard, S. and Zhang, X., {\em A second order estimate for general complex Hessian equations}, Anal. PDE {\bf 9} (2016), no.7, 1693--1709.
\bibitem{PPZ2} Phong, D.H., Picard, S. and Zhang, X., {\em The Fu-Yau equation with negative slope parameter}, Invent. Math. {\bf 209} (2017), no.2, 541--576.
\bibitem{PPZ3} Phong, D.H., Picard, S. and Zhang, X., {\em New curvature flows in complex geometry}, in {\em Surveys in differential geometry 2017. Celebrating the 50th anniversary of the Journal of Differential Geometry}, 331--364, Surv. Differ. Geom., 22, Int. Press, Somerville, MA, 2018.
\bibitem{PPZ4} Phong, D.H., Picard, S. and Zhang, X., {\em Fu-Yau Hessian equations}, preprint, arXiv:1801.09842.
\bibitem{Po} Popovici, D., {\em Aeppli cohomology classes associated with Gauduchon metrics on compact complex manifolds},  Bull. Soc. Math. France {\bf 143} (2015), no.4, 763--800.
\bibitem{Sz} Sz\'ekelyhidi, G., {\em Fully non-linear elliptic equations on compact Hermitian manifolds},  J. Differential Geom. {\bf 109} (2018), no.2, 337--378
\bibitem{STW} Sz\'ekelyhidi, G., Tosatti, V. and Weinkove, B., {\em Gauduchon metrics with prescribed volume form}, Acta Math. {\bf 219} (2017), no.1, 181--211.
\bibitem{TWWY} Tosatti, V., Wang, Y., Weinkove, B. and Yang, X., {\em $C^{2,\alpha}$ estimates for nonlinear elliptic equations in complex and almost complex geometry}, Calc. Var. Partial Differential Equations {\bf 54} (2015), no.1, 431--453.
\bibitem{TW1} Tosatti, V. and Weinkove, B., {\em Estimates for the complex Monge-Amp\`ere equation on Hermitian and balanced manifolds}, Asian J. Math. {\bf 14} (2010), no.1, 19--40.
\bibitem{TW2} Tosatti, V. and Weinkove, B., {\em The complex Monge-Amp\`ere equation on compact Hermitian manifolds},  J. Amer. Math. Soc. {\bf 23} (2010), no.4, 1187--1195.
\bibitem{TW3} Tosatti, V. and Weinkove, B., {\em The Monge-Amp\`ere equation for $(n-1)$-plurisubharmonic functions on a compact K\"ahler manifold}, J. Amer. Math. Soc. {\bf 30} (2017), no.2, 311--346.
\bibitem{TW4} Tosatti, V. and Weinkove, B., {\em The Aleksandrov-Bakelman-Pucci estimate and the Calabi-Yau equation}, in {\em Nonlinear Analysis in Geometry and Applied Mathematics, Part 2}, 147--158, Harvard CMSA Ser. Math. 2, International Press, 2018.
\bibitem{TW5} Tosatti, V. and Weinkove, B., {\em Hermitian metrics, $(n-1, n-1)$ forms and Monge-Amp\`ere equations}, to appear in J. Reine Angew. Math.
\bibitem{Tr}  Trudinger, N.S., {\em Fully nonlinear, uniformly elliptic equations under natural structure conditions}, Trans. Am. Math. Soc. {\bf 278} (1983), no. 2, 751--769.
\bibitem{Ya} Yau, S.-T., {\em On the Ricci curvature of a compact K\"ahler manifold and the complex Monge-Amp\`ere equation, I}, Comm. Pure Appl. Math. {\bf 31} (1978), no.3, 339--411.
\bibitem{Yuan} Yuan, R., {\em On a class of fully nonlinear elliptic equations containing gradient terms on compact Hermitian manifolds}, Canad. J. Math. {\bf 70} (2018), no.4, 943--960.
\bibitem{Zha} Zhang, X. {\em A priori estimates for complex Monge-Amp\`ere equation on Hermitian manifolds}, 
Int. Math. Res. Not. IMRN 2010, {\bf 19}, 3814--3836. 
\bibitem{Zh} Zheng, T., {\em A parabolic Monge-Amp\`ere type equation of Gauduchon metrics}, to appear in Int. Math. Res. Not. IMRN.
\end{thebibliography}
\end{document}